\documentclass[reqno,10pt]{amsart}
\usepackage{graphicx}
\usepackage{amsmath,amsthm,amsfonts,amssymb,latexsym,epsfig}
\newtheorem{theorem}{Theorem}[section]
\newtheorem{lemma}[theorem]{Lemma}

\theoremstyle{definition}

\usepackage{hyperref}
\theoremstyle{remark}

\usepackage{orcidlink}
\usepackage{pdflscape}
\numberwithin{equation}{section}
\usepackage{diagbox}
\usepackage{booktabs} 
\usepackage{array} 
\begin{document}

\title[short text for running head]{``On $7^k$-regular partitions modulo powers of $7$''}
\title{On $7^k$-regular partitions modulo powers of $7$}

\author[D. S. Gireesh and HemanthKumar B.]
{D. S. Gireesh $^{1}$\orcidlink{0000-0002-2804-6479}}
\address{$^{1}$Department of Mathematics, BMS College of Engineering, P.O. Box No.: 1908, Bull Temple Road,
Bengaluru-560 019, Karnataka, India.}
\email{gireeshdap@gmail.com}
\thanks{}

\author[]
{HemanthKumar B.$^{2}$\orcidlink{0000-0001-7904-293X}}

\address{$^2$Department of Mathematics, RV College of Engineering, RV Vidyanikethan Post, Mysore Road, Bengaluru-560 059, Karnataka, India.}
\email{hemanthkumarb.30@gmail.com}
\thanks{}

\date{}
          
\begin{abstract}
In this study, we explore the arithmetic properties of $b_{7^k}(n)$ for any $k\geq1$, which enumerates the partitions of $n$ where no part is divisible by $7^k$. By constructing generating functions for $b_{7^k}(n)$ over specific arithmetic progressions, we establish a collection of Ramanujan-type congruences.
\end{abstract}

\medskip
\subjclass[2010]{05A17, 11P83}
\keywords{Partitions; $\ell$-Regular Partitions; Generating Functions; Congruences}

\maketitle

\section{Introduction}
\label{intro}
A partition of a positive integer $n$ is defined as a non-increasing sequence of positive integers whose sum is $n$. Let $p(n)$ denote the number of partitions of $n$, with its generating function given by
\[\sum\limits_{n\geq0}p(n)q^n=\frac1{f_1}.\]
Throughout this paper, we define
\[f_r:=(q^r;q^r)_\infty=\prod\limits_{m=1}^{\infty}(1-q^{rm}).\]

Ramanujan \cite{Ram2} conjectured that if $\ell\in\{5,7,11\}$ and $0<\delta_{\ell,\beta}<\ell^\beta$ such that $24\delta_{\ell,\beta}\equiv 1\pmod{\ell^\beta}$, then for $n\geq0$,
\begin{equation}\label{RC}
    p\left(\ell^\beta n+\delta_{\ell,\beta}\right)\equiv 0\pmod{\ell^\beta}.
\end{equation}
The above conjecture was proved by Watson \cite{GNW} for the case $\ell=5$. Since $p(243)$ is not divisible by $7^3$, this contradicts the above conjecture for $\ell=7$ and $\beta=3$. This inconsistency was noticed by Chowla\cite{CS}. The appropriate modification of the conjecture was later proved by Watson \cite{GNW}, namely that if $k\geq1$, then
\begin{equation}\label{pc11}
    p(7^{2k-1}n+\lambda_{2k-1})\equiv 0\pmod{7^k},
\end{equation}
and
\begin{equation}\label{pc12}
    p(7^{2k}n+\lambda_{2k})\equiv 0\pmod{7^{k+1}},
\end{equation}
where $\lambda_k$ is the reciprocal modulo $7^k$ of $24$.

Using the classical identities of Euler and Jacobi, Hirschhorn and Hunt \cite{HH} provided a simple proof of \eqref{RC} for $\ell=5$ and a similar proof for the case $\ell=7$ was later given by Garvan\cite{FG} by providing generating functions
\begin{equation}\label{H1}
\sum_{n\geq 0}p\left(7^{2k-1}n+\lambda_{2k-1}\right)q^n=\sum_{j\geq 1}x_{2k-1,j} \, q^{j-1}\frac{f_7^{4j-1}}{f_1^{4j}}
\end{equation}
and
\begin{equation}\label{H2}
\sum_{n\geq 0}p\left(7^{2k}n+\lambda_{2k}\right)q^n=\sum_{j\geq 1}x_{2k,j} \, q^{j-1}\frac{f_7^{4j}}{f_1^{4j+1}},
\end{equation}
where the coefficient vectors $\mathbf{x}_k= (x_{k,1}, x_{k,2},\dots)$ are given by
$$\mathbf{x}_1=\left(x_{1,1},x_{1,2},\dots\right)=\left(7,7^2,0,\dots\right),$$ and for $k\geq1$,
\[
   x_{k+1, i}=\begin{cases}
  \displaystyle \sum_{j\geq1} x_{k, j} \,m_{4j,j+i}, \,\,\text{if}\,\,k\,\,\text{is odd},\\
   \displaystyle \sum_{j\geq1}x_{k, j} \,m_{4j+1,j+i}, \,\,\text{if}\, \,k \,\,\text{is even},
    \end{cases}
\]
where the first seven rows of $M=\left(m_{i,j}\right)_{i,j\geq 1}$ are
\begin{table}[h!]
\centering
\caption{Values of $m_{i,j}$, where $1 \leq i \leq 7$ and $1 \leq j \leq 7$.}
\renewcommand{\arraystretch}{1.5} 
\setlength{\tabcolsep}{4pt} 
\begin{tabular}{|c|ccccccc|}
\hline
\diagbox[width=2em]{$i$}{$j$} & $1$ & $2$ & $3$ & $4$ & $5$ & $6$ & $7$ \\ \hline
1 & $7$ & $7^2$ & $0$ & $0$ & $0$ & $0$ & $0$ \\
2 & $10$ & $9 \times 7^2$ & $2 \times 7^4$ & $7^5$ & $0$ & $0$ & $0$ \\
3 & $3$ & $114 \times 7$ & $85 \times 7^3$ & $24 \times 7^5$ & $3 \times 7^7$ & $7^8$ & $0$ \\
4 & $0$ & $82 \times 7$ & $176 \times 7^3$ & $845 \times 7^4$ & $272 \times 7^6$ & $46 \times 7^8$ & $4 \times 7^{10}$ \\
5 & $0$ & $190$ & $1265 \times 7^2$ & $1895 \times 7^4$ & $1233 \times 7^6$ & $3025 \times 7^7$ & $620 \times 7^9$ \\
6 & $0$ & $27$ & $736 \times 7^2$ & $16782 \times 7^3$ & $20424 \times 7^5$ & $12825 \times 7^7$ & $4770 \times 7^9$ \\
7 & $0$ & $1$ & $253 \times 7^2$ & $1902 \times 7^4$ & $4246 \times 7^6$ & $31540 \times 7^7$ & $19302 \times 7^9$ \\ \hline
\end{tabular}
\end{table}

\begin{table}[h!]
\centering
\caption{Values of $m_{i,j}$, where $1 \leq i \leq 7$ and $8 \leq j \leq 14$.}
\renewcommand{\arraystretch}{1.5} 
\setlength{\tabcolsep}{4pt} 
\begin{tabular}{|c|ccccccc|}
\hline
\diagbox[width=2em]{$i$}{$j$} & $8$ & $9$ & $10$ & $11$ & $12$ & $13$ & $14$ \\ \hline
1 & $0$ & $0$ & $0$ & $0$ & $0$ & $0$ & $0$ \\
2 & $0$ & $0$ & $0$ & $0$ & $0$ & $0$ & $0$ \\
3 & $0$ & $0$ & $0$ & $0$ & $0$ & $0$ & $0$ \\
4 & $7^{11}$ & $0$ & $0$ & $0$ & $0$ & $0$ & $0$ \\
5 & $75 \times 7^{11}$ & $5 \times 7^{13}$ & $7^{14}$ & $0$ & $0$ & $0$ & $0$ \\
6 & $7830 \times 7^{10}$ & $1178 \times 7^{12}$ & $111 \times 7^{14}$ & $6 \times 7^{16}$ & $7^{17}$ & $0$ & $0$ \\
7 & $7501 \times 7^{11}$ & $1944 \times 7^{13}$ & $2397 \times 7^{14}$ & $285 \times 7^{16}$ & $22 \times 7^{18}$ & $7^{20}$ & $7^{20}$ \\ \hline
\end{tabular}
\end{table}
and for $i\geq4$, $m_{i,1}=0$, and for $i\geq 8$, $m_{i,2}=0$ and for $i\geq8$, $j\geq3$,
\begin{align}
\nonumber m_{i,j}&=7m_{i-3,j-1}+35m_{i-2,j-1}+49m_{i-1,j-1}+m_{i-7,j-2}+7m_{i-6,j-2}\\&+21m_{i-5,j-2}+49m_{i-4,j-2}+147m_{i-3,j-2}+343m_{i-2,j-2}+343m_{i-1,j-2}.\label{mij}
\end{align}

For any positive integer $\ell$, a partition is said to be $\ell$-regular if none of its parts are divisible by $\ell$. Let $b_\ell(n)$ denote the number of $\ell$-regular partitions of $n$. The generating function $b_\ell(n)$ is given by
\begin{equation}\label{bl}
    \sum\limits_{n\geq0}b_\ell(n)q^n=\frac{f_\ell}{f_1}.
\end{equation}

Wang \cite{LW} and Adiga et al. \cite{AR} derived congruences for $7$-regular partitions analogous to \eqref{pc11} and \eqref{pc12}. Specifically, for $n,\beta\geq 0$, they proved that
\begin{equation}\label{W1}
b_7\left(7^{2\beta+1}n+\frac{3\cdot7^{2\beta+1}-1}{4}\right)\equiv0\pmod{7^{\beta+1}}.
\end{equation}
Adiga and Ranganath\cite{AR} also proved that
\begin{equation}\label{AR2}
    b_{49}\left(7^{\beta+1}(n+1)-2\right)\equiv0\pmod{7^{\beta+1}},
\end{equation}
for all $n,\beta\geq 0$. For further exploration of $\ell$-regular partitions, see references \cite{AB}--\cite{FP}, \cite{HSZ}, \cite{P}, and \cite{w}.

We establish congruences for $7^k$--regular partitions for all positive integers $k$. The congruences previously derived by Wang and Adiga et al. can be seen as specific cases of the more general results presented in this work.

The main results are as follow:
\begin{theorem}\label{th1}
For each $n,\beta \geq0$, and $k\geq1$, we have
\begin{equation}\label{c1}
b_{7^{2k-1}}\left(7^{2k+2\beta-1}n+\frac{18\cdot7^{2k+2\beta-1}-7^{2k-1}+1}{24}\right)\equiv 0 \pmod{7^{k+\beta}},
\end{equation}
and
\begin{equation}\label{c3}
b_{7^{2k}}\left(7^{2k+\beta-1}\left(n+1\right)-\frac{7^{2k}-1}{24}\right)\equiv 0\pmod{7^{k+\beta}}.
\end{equation}
\end{theorem}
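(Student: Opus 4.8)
The idea is to treat $\sum_{n\ge 0}b_{7^k}(n)q^n=f_{7^k}/f_1$ as an object to be $7$-dissected repeatedly, to identify the first several dissections with Garvan's analysis of $p(n)$ via \eqref{H1}--\eqref{H2}, and then to continue the dissection by a transfer-matrix argument anchored on the array $M$ and the recursion \eqref{mij}. The starting observation is that for $m\ge 1$ the series $f_{7^m}(q)=f_{7^{m-1}}(q^7)$ is a power series in $q^7$; combining this with the $7$-dissection $1/f_1=\sum_{i=0}^{6}q^iA_i(q^7)$, $A_i(q):=\sum_{n\ge 0}p(7n+i)q^n$, gives $\sum_{n\ge 0}b_{7^m}(7n+i)q^n=f_{7^{m-1}}(q)A_i(q)$, and for $i=5$ the factor $A_5(q)=7f_7^3/f_1^4+49qf_7^7/f_1^8$ is precisely Garvan's starting point. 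Iterating, the passage from the progression $7^tn+\lambda_t$ to $7^{t+1}n+\lambda_{t+1}$ (with $\lambda_t\equiv 24^{-1}\pmod{7^t}$, $0<\lambda_t<7^t$, and $\lambda_{t+1}=\lambda_t+7^ts_t$, $s_t\in\{0,\dots,6\}$) is the operator extracting the residue $s_t$ modulo $7$, which slides past the $q^7$-power series $f_{7^{m-t}}$ whenever $m-t\ge 1$; together with \eqref{H1}--\eqref{H2} this yields, for $0\le t\le m$,
\[
\sum_{n\ge 0}b_{7^m}\bigl(7^tn+\lambda_t\bigr)q^n=f_{7^{m-t}}(q)\,g_t(q),\qquad g_t(q):=\sum_{n\ge 0}p\bigl(7^tn+\lambda_t\bigr)q^n,
\]
where $g_t(q)=\sum_{i\ge 1}x_{t,i}q^{i-1}f_7^{4i-1}/f_1^{4i}$ for odd $t$ and $g_t(q)=\sum_{i\ge 1}x_{t,i}q^{i-1}f_7^{4i}/f_1^{4i+1}$ for even $t$.

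\textbf{Base congruences.} Taking $t=m$ (so $f_{7^0}=f_1$) gives $\sum_{n\ge 0}b_{7^m}(7^mn+\lambda_m)q^n=f_1(q)g_m(q)$, which equals $\sum_{i\ge 1}x_{m,i}q^{i-1}f_7^{4i-1}/f_1^{4i-1}$ for odd $m$ and $\sum_{i\ge 1}x_{m,i}q^{i-1}(f_7/f_1)^{4i}$ for even $m$; likewise $t=m-1$ with $m$ even gives $f_7(q)g_{m-1}(q)=\sum_{i\ge 1}x_{m-1,i}q^{i-1}(f_7/f_1)^{4i}$. Watson's congruences \eqref{pc11}--\eqref{pc12} hold because $7^k\mid x_{2k-1,i}$ and $7^{k+1}\mid x_{2k,i}$ for all $i$ (the $7$-power pattern forced by \eqref{mij} out of $\mathbf{x}_1=(7,7^2,0,\dots)$), so the three identities above immediately give \eqref{c1} with $\beta=0$ and \eqref{c3} with $\beta\in\{0,1\}$, once one checks (a short computation) that $7^mn+\lambda_m$ and $7^{m-1}n+\lambda_{m-1}$ are the progressions written there.

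\textbf{Remaining dissections and the main obstacle.} For \eqref{c1} with $\beta\ge 1$ and \eqref{c3} with $\beta\ge 2$ one must keep $7$-dissecting $f_1(q)g_m(q)$ and its successors, and now $f_1$ is not a power series in $q^7$ and no longer commutes with the dissection operator. I would handle this by establishing $7$-dissection identities for the new building blocks $f_7^{4i-1}/f_1^{4i-1}=f_1\cdot f_7^{4i-1}/f_1^{4i}$ and $(f_7/f_1)^{4i}$, obtained by feeding the classical $7$-dissection of $f_1$ (equivalently that of $f_1^3$, whose component in the residue class $0$ modulo $7$ already carries a factor of $7$) into Garvan's dissections of $f_7^{4i-1}/f_1^{4i}$; the resulting transfer matrix is again packaged by $M$ via \eqref{mij}, so that
\[
\sum_{n\ge 0}b_{7^{2k-1}}\!\left(7^{2k+2\beta-1}n+\frac{18\cdot 7^{2k+2\beta-1}-7^{2k-1}+1}{24}\right)q^n=\sum_{i\ge 1}y^{(k,\beta)}_{i}\,q^{i-1}\,\frac{f_7^{4i-1}}{f_1^{4i-1}},
\]
and analogously for \eqref{c3}, where $\mathbf{y}^{(k,\beta)}$ comes from $\mathbf{x}_{2k-1}$ (resp.\ $\mathbf{x}_{2k}$) by the prescribed further $M$-steps and the stated residue is produced along the way; the theorem then follows from a $7$-adic induction on $\beta$ giving $7^{k+\beta}\mid y^{(k,\beta)}_{i}$ for all $i$. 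I expect the crux to be this last step: both pinning down the dissection identities for $f_7^{4i-1}/f_1^{4i-1}$ (and verifying they are governed by the same $M$) and, above all, proving the uniform divisibility $7^{k+\beta}\mid y^{(k,\beta)}_{i}$, which — exactly as in Garvan's analysis of $p(n)$ — hinges on the precise $7$-divisibility structure dictated by \eqref{mij} rather than on any single estimate.
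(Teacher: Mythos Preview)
Your outline matches the paper's proof: relate $b_{7^m}$ to $p$ via the factor $f_{7^m}$, invoke Garvan's $\mathbf{x}_{2k-1}$ at the $(2k{-}1)$-st dissection to obtain an expansion in powers of $f_7/f_1$, then iterate the huffing operator and bound the $7$-adic valuation of the resulting coefficient vectors by induction.

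One correction of emphasis: the step you call the ``main obstacle'' is in fact immediate. For every exponent $a$ one has $q^{2a}(f_{49}/f_1)^a=\xi^{-a}$, and since $f_7,f_{49}$ are series in $q^7$, the huffing of $(f_7/f_1)^a$ is read off directly from Garvan's identity $H(\xi^{-a})=\sum_{j} m_{a,j}T^{-j}$; there is no need to dissect $f_1$ separately and recombine. The paper records the three needed cases $a=4i-1,\,4i-3,\,4i$ as a two-line corollary of \eqref{Gu1}--\eqref{Gu2}. (In particular, for $b_{7^{2k-1}}$ the iteration alternates between the bases $(f_7/f_1)^{4j-1}$ and $(f_7/f_1)^{4j-3}$, a point your final display suppresses.) The only real work is the valuation induction, and that too is routine rather than structural: Garvan's explicit bound $\pi(m_{i,j})\ge\lfloor(7j-2i-1)/4\rfloor$, specialised to $m_{4i-1,i+j-1}$, $m_{4i-3,i+j-1}$, $m_{4i,i+j}$, feeds into a one-line floor inequality per case to give $\pi\bigl(y_{\beta+1,j}\bigr)\ge k+\beta+\lfloor(7j-7)/4\rfloor$, from which both \eqref{c1} and \eqref{c3} follow at $j=1$.
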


If we put $k=1$ in \eqref{c1} and \eqref{c3}, we obtain congruences \eqref{W1} and \eqref{AR2}, respectively.

\section{Preliminaries}
In this section, we present several lemmas that serve as foundational tools for establishing our main results.
Let $H$ be the “huffing” operator modulo $7$, that is,
\[H\left(\sum{a(n)q^n}\right)=\sum{a(7n)q^{7n}}.\]
\begin{lemma}[\cite{FG}, Lemma 3.1 and 3.4]
 If $\xi= \frac{f_1}{q^2 f_{49}}$ and $T = \frac{f_7^4}{q^7 f_{49}^4}$, then
\begin{equation}\label{Gu1}
  H \left( \xi^{-i} \right)  = \sum_{j\geq1} m_{i,j} T^{-j}
\end{equation}
and
\begin{equation}\label{Gu2}
  H \left( \xi^{-4i} \right)  = \sum_{j\geq1} m_{4i,i+j} T^{-i-j}. 
\end{equation}
\end{lemma}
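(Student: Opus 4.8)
The plan is to prove the two huffing identities by working in the function field generated by $\xi$ and $T$ over the $7$-dissection of the eta-quotients. First I would record the basic $7$-dissection of $f_1/f_{49}$: writing $\xi = f_1/(q^2 f_{49})$, the product $f_1 = (q;q)_\infty$ splits according to residues modulo $7$, and after extracting the $49$-part $f_{49}$ one obtains $\xi$ as a Laurent polynomial of bounded degree in a single uniformizer for the relevant modular curve (the curve $X_0(49)$, genus $1$, on which both $\xi$ and $T$ are functions with poles only at the cusps). The key structural fact I would establish or quote is the modular equation of degree $7$ relating $\xi$ and $T$, namely a polynomial relation $P(\xi,T)=0$ which expresses $\xi^7$ (or an appropriate symmetric function of the seven conjugate values $\xi_0,\dots,\xi_6$ obtained by $q\mapsto \zeta_7^a q^{1/7}$) in terms of $T$. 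The recurrence \eqref{mij} together with the seed values in Tables 1 and 2 is precisely the shadow of this degree-$7$ modular relation, so the main conceptual content is that $H(\xi^{-i})$ is a polynomial in $T^{-1}$ with no constant term.

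Next I would set up the huffing operator concretely. Since $H$ averages over the seven $7$th roots of unity, $H(\xi^{-i})$ equals $\frac{1}{7}\sum_{a=0}^{6}\xi(\zeta_7^a q^{1/7})^{-i}$, which is the $a$-averaged sum over the Galois conjugates of $\xi^{-i}$ under the degree-$7$ covering $X_0(49)\to X_0(49)/\langle w\rangle$ induced by the Atkin--Lehner/$7$-isogeny structure. This average is a symmetric function of the conjugates and hence, by the modular equation $P(\xi,T)=0$, a rational function of $T$ alone; analyzing its poles (which can occur only at the cusps, where $\xi$ and $T$ have explicit orders in $q$) shows it is in fact a polynomial in $T^{-1}$ vanishing at $T^{-1}=0$. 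This forces the expansion
\[
H(\xi^{-i}) = \sum_{j\geq 1} m_{i,j} T^{-j},
\]
and matching $q$-expansions order by order defines the $m_{i,j}$ uniquely. To get the explicit entries of Tables 1 and 2, I would compute the first several cases $i=1,2,\dots,7$ directly from the $7$-dissection, and then verify that all remaining rows are generated by the recurrence \eqref{mij}. The recurrence itself would be derived by multiplying the generating relation by the minimal polynomial of $\xi$ over $\mathbb{C}(T)$: the ten coefficients $7,35,49,1,7,21,49,147,343,343$ appearing in \eqref{mij} are exactly the coefficients of this degree-$7$ modular equation, so a single polynomial identity yields the full recursion.

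Finally, identity \eqref{Gu2} is the special case $i\mapsto 4i$ repackaged to exhibit the diagonal shift that makes the generating functions \eqref{H1}--\eqref{H2} close up. Here I would observe that $H(\xi^{-4i})$ has, from the cusp-order analysis, its lowest pole in $T^{-1}$ at order exactly $i+1$ rather than $1$: the factor $\xi^{-4i}$ carries a pole of order $4i$ at the relevant cusp, and since $T$ has a pole of order $4$ there (because $T=f_7^4/(q^7 f_{49}^4)$ scales as the fourth power relative to $\xi$), the averaged function starts at $T^{-(i+1)}$. Re-indexing the sum in \eqref{Gu1} with $i\mapsto 4i$ and shifting $j\mapsto i+j$ then gives precisely
\[
H(\xi^{-4i}) = \sum_{j\geq 1} m_{4i,i+j}\, T^{-i-j},
\]
which is \eqref{Gu2}. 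The hard part will be the cusp-order bookkeeping that pins down both the vanishing of the constant term in \eqref{Gu1} and the exact starting index $i+1$ in \eqref{Gu2}; everything else reduces to the single modular equation of degree $7$ and the bounded-pole principle on the genus-one curve $X_0(49)$. Since Garvan \cite{FG} established these as Lemmas 3.1 and 3.4, I would lean on his $7$-dissection computations for the explicit table entries and present the structural argument above as the conceptual justification.
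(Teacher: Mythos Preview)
The paper does not prove this lemma at all: it is quoted verbatim as Lemmas~3.1 and~3.4 of Garvan~\cite{FG}, with no argument supplied. So there is nothing in the present paper to compare your proposal against; any comparison has to be with Garvan's original treatment.

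Your sketch is structurally sound and is a legitimate route to both identities. The key points --- that $H(\xi^{-i})$, being the average over $q\mapsto\zeta_7^a q$, lies in the $T$-function field; that pole/$q$-order bookkeeping forces it to be a polynomial in $T^{-1}$ with zero constant term; and that the recurrence~\eqref{mij} is the shadow of the degree-$7$ modular relation between $\xi$ and $T$ --- are all correct. Your derivation of~\eqref{Gu2} from~\eqref{Gu1} via the $q$-order argument is also valid: since $\xi^{-4i}=q^{8i}f_{49}^{4i}/f_1^{4i}$ has all positive coefficients from $q^{8i}$ onward and $8i>7i$, the huffed series begins at $q$-order at least $7(i+1)$, so $m_{4i,j}=0$ for $j\le i$ and the reindexing $j\mapsto i+j$ is justified. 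One small slip: your formula $H(\xi^{-i})=\tfrac{1}{7}\sum_a \xi(\zeta_7^a q^{1/7})^{-i}$ is the $U_7$ operator in the variable $q$, whereas the paper's $H$ keeps the output in $q^7$; since $T$ is a series in $q^7$ this is only a harmless change of variable, but you should say so.

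Garvan's own proof in~\cite{FG} is more elementary and computational than your modular-curve framing: he establishes an explicit polynomial identity between $\xi$ and $T$ (essentially the $7$-dissection of $f_1$ in Jacobi--Euler form) and then derives~\eqref{Gu1} and the recurrence~\eqref{mij} by direct manipulation, with~\eqref{Gu2} obtained exactly as you do, from the vanishing $m_{4i,j}=0$ for $j\le i$. Your approach packages the same ingredients in the language of function fields on $X_0(49)$; it buys conceptual clarity about \emph{why} the expansion in $T^{-1}$ must exist and terminate, at the cost of importing more background. Either way, for the purposes of the present paper the lemma is a black-box citation, and your write-up would be overkill here.
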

\begin{lemma}
For all $i\geq1$, we have
\begin{align}
H \left( q^{i-2} \frac{f_{7}^{4i-1}}{f_1^{4i-1}} \right) &= \sum_{j\geq1} m_{4i-1,i+j-1} q^{7j-7} \frac{f_{49}^{4j-3}}{f_7^{4j-3}},\label{H4i-1} \\
H \left( q^{i+1} \frac{f_{7}^{4i-3}}{f_1^{4i-3}} \right) &= \sum_{j\geq1} m_{4i-3,i+j-1} q^{7j} \frac{f_{49}^{4j-1}}{f_7^{4j-1}},\label{H4i-3}
\end{align}
and
\begin{equation}
H \left( q^{i} \frac{f_{7}^{4i}}{f_1^{4i}} \right)   = \sum_{j\geq1} m_{4i, i+j} \, q^{7j} \frac{f_{49}^{4j}}{f_7^{4j}}.\label{H4i} \\
\end{equation}
\end{lemma}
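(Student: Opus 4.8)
The plan is to derive all three identities from the previous lemma by exploiting the fact that the huffing operator $H$ is linear over the ring $R$ of Laurent series in $q^{7}$: since $H$ retains exactly the terms whose exponent is a multiple of $7$, one has $H(g\,h)=g\,H(h)$ for every $g\in R$. The point is that $f_7$, $f_{49}$, $q^{\pm7}$, and hence $T=f_7^{4}/(q^{7}f_{49}^{4})$, all lie in $R$, whereas $\xi=f_1/(q^{2}f_{49})$ does not.

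First I would rewrite each argument of $H$ in the form $g\,\xi^{-m}$ with $g\in R$. Collecting the powers of $q,f_1,f_7,f_{49}$ in the definitions of $\xi$ and $T$ gives the exact identities
\[
q^{i}\frac{f_7^{4i}}{f_1^{4i}}=T^{i}\,\xi^{-4i},\qquad
q^{i-2}\frac{f_7^{4i-1}}{f_1^{4i-1}}=\frac{f_{49}}{f_7}\,T^{i}\,\xi^{-(4i-1)},\qquad
q^{i+1}\frac{f_7^{4i-3}}{f_1^{4i-3}}=\frac{f_7}{f_{49}}\,T^{i-1}\,\xi^{-(4i-3)}.
\]
Applying $H$ and pulling the factors lying in $R$ out of $H$, the problem reduces to evaluating $H(\xi^{-4i})$, $H(\xi^{-(4i-1)})$ and $H(\xi^{-(4i-3)})$, which is exactly what \eqref{Gu1}–\eqref{Gu2} supply.

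Next I would insert those evaluations. For \eqref{H4i} this is immediate: by \eqref{Gu2}, $T^{i}H(\xi^{-4i})=T^{i}\sum_{j\ge1}m_{4i,i+j}T^{-i-j}=\sum_{j\ge1}m_{4i,i+j}T^{-j}$, and substituting $T^{-j}=q^{7j}f_{49}^{4j}/f_7^{4j}$ yields the claim. For \eqref{H4i-1} and \eqref{H4i-3} I would use \eqref{Gu1} at indices $4i-1$ and $4i-3$ and then shift the summation index, writing $j=i+j'-1$. This shift is legitimate because $M$ is banded: $m_{r,j}=0$ whenever $4j\le r$ — which follows by induction on $j$ from the recurrence \eqref{mij} together with the stated vanishings $m_{r,1}=0$ for $r\ge4$ and $m_{r,2}=0$ for $r\ge8$ — so $m_{4i-1,j}$ and $m_{4i-3,j}$ vanish for $j\le i-1$ and the new index $j'$ runs over $j'\ge1$. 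After the shift the leftover power of $T$ is $T^{1-j'}$ in the first case and $T^{-j'}$ in the second; rewriting $T^{1-j'}=q^{7j'-7}f_{49}^{4j'-4}/f_7^{4j'-4}$ and $T^{-j'}=q^{7j'}f_{49}^{4j'}/f_7^{4j'}$ and absorbing the remaining factor $f_{49}/f_7$, respectively $f_7/f_{49}$, produces precisely the right-hand sides of \eqref{H4i-1} and \eqref{H4i-3}.

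The only genuine content beyond this is the $R$-linearity of $H$ (immediate from its definition on $q$-expansions) and the band vanishing of $M$, which is what keeps the shifted summation in the range $j'\ge1$; everything else is a matter of carefully matching exponents of $q,f_1,f_7,f_{49}$. I expect that exponent bookkeeping — in particular the powers of $q$ in the shift step for \eqref{H4i-1} and \eqref{H4i-3} — to be the only place where real care is needed.
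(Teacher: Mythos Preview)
Your proposal is correct and follows essentially the same approach as the paper: both arguments apply \eqref{Gu1}--\eqref{Gu2}, use the vanishing $m_{4i-1,j}=m_{4i-3,j}=0$ for $j<i$ to shift the summation index, and pass the $q^{7}$-series factors through $H$. You are simply more explicit than the paper in isolating the $R$-linearity of $H$ and in justifying the band vanishing of $M$ via the recurrence \eqref{mij}, whereas the paper states the vanishing as a fact and leaves the final ``which yields \eqref{H4i-1}'' step implicit.
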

\begin{proof}
From \eqref{Gu1}, we have
\begin{equation*}
H \left( q^{2i} \frac{f_{49}^{i}}{f_1^{i}} \right) = \sum_{j\geq1} m_{i,j} \, q^{7j} \frac{f_{49}^{4j}}{f_7^{4j}}.
\end{equation*}
The fact that $m_{4i-1, j}=0$ for $1\leq j<i$ implies that
\begin{align*}
H \left( \left(q^{2} \frac{f_{49}}{f_1} \right)^{4i-1} \right) &=  \sum_{j\geq i} m_{4i-1,j} \, q^{7j} \frac{f_{49}^{4j}}{f_7^{4j}}\\
&=\sum_{j\geq 1} m_{4i-1,j+i-1} \, q^{7j+7i-7} \frac{f_{49}^{4j+4i-4}}{f_7^{4j+4i-4}},
\end{align*}
which yields \eqref{H4i-1}. Similarly, we can prove \eqref{H4i-3} and \eqref{H4i} follows from \eqref{Gu2}.
\end{proof}
\section{Generating functions}
In this section, we establish generating functions for $b_{7^{\ell}}(n)$ within specific arithmetic progressions.
\begin{theorem}\label{T1}
For each $\beta \geq0$ and $k\geq1$, we have
\begin{align}\label{G1}
 \nonumber   &\sum_{n\geq 0}b_{7^{2k-1}}\left(7^{2k+2\beta-1}n+\frac{18\cdot7^{2k+2\beta-1}-7^{2k-1}+1}{24}\right)q^n\\&=\sum_{j\geq 1}y^{(2k-1)}_{2\beta+1,j} \, q^{j-1}\left(\frac{f_7}{f_1}\right)^{4j-1}
\end{align}
and 
\begin{equation}\label{G2}
    \sum_{n\geq 0}b_{7^{2k-1}}\left(7^{2k+2\beta}n+\frac{6\cdot7^{2k+2\beta}-7^{2k-1}+1}{24}\right)q^n=\sum_{j\geq 1}y^{(2k-1)}_{2\beta+2,j} \,q^{j-1} \left(\frac{f_7}{f_1}\right)^{4j-3},
\end{equation}
where the coefficient vectors are defined as follows:
\begin{equation}\label{G11}
        y_{1,j}^{(2k-1)}=x_{2k-1,j}
\end{equation}
and
\begin{equation}\label{G12}
       y^{(2k-1)}_{\beta+1,j}= 
       \begin{cases}
      \displaystyle \sum_{i\geq1}y^{(2k-1)}_{\beta,i} m_{4i-1,i+j-1} \,\,\   & \text{if  $\beta$ is odd},\\
       \displaystyle \sum_{i\geq1}y^{(2k-1)}_{\beta,i}m_{4i-3,i+j-1} \,\,\   & \text{if  $\beta$ is even},
       \end{cases}
\end{equation}
for all $\beta, j\geq 1$.
\end{theorem}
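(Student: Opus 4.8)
The plan is to establish \eqref{G1} and \eqref{G2} simultaneously by induction on $\beta$, organized as an alternating chain: \eqref{G1} with $\beta=0$ implies \eqref{G2} with $\beta=0$, which implies \eqref{G1} with $\beta=1$, which implies \eqref{G2} with $\beta=1$, and so on. Each implication is a single application of the huffing operator $H$ together with one of the identities \eqref{H4i-1}, \eqref{H4i-3}, and the recurrence \eqref{G12} will fall out automatically once the order of summation is interchanged.

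For the base case, namely \eqref{G1} with $\beta=0$, the first point is arithmetic: the integer $\frac{18\cdot7^{2k-1}-7^{2k-1}+1}{24}=\frac{17\cdot7^{2k-1}+1}{24}$ lies strictly between $0$ and $7^{2k-1}$, and $24$ times it is $\equiv1\pmod{7^{2k-1}}$, so it equals $\lambda_{2k-1}$. The second point is that $f_{7^{2k-1}}(q)=f_1\!\left(q^{7^{2k-1}}\right)$ is a power series in $q^{7^{2k-1}}$. Writing $\frac{f_{7^{2k-1}}}{f_1}=f_{7^{2k-1}}\sum_{n\ge0}p(n)q^n$ and extracting the arithmetic progression $7^{2k-1}n+\lambda_{2k-1}$ (a Cauchy product that is legitimate since $\lambda_{2k-1}<7^{2k-1}$, so no negative arguments of $p$ occur) gives
\[
\sum_{n\ge0}b_{7^{2k-1}}\!\left(7^{2k-1}n+\lambda_{2k-1}\right)q^n=f_1\sum_{n\ge0}p\!\left(7^{2k-1}n+\lambda_{2k-1}\right)q^n .
\]
Substituting Garvan's formula \eqref{H1} and cancelling one factor $f_1$ turns the right-hand side into $\sum_{j\ge1}x_{2k-1,j}\,q^{j-1}\left(f_7/f_1\right)^{4j-1}$, which is exactly \eqref{G1} at $\beta=0$ with $y^{(2k-1)}_{1,j}=x_{2k-1,j}$, i.e.\ \eqref{G11}.

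For the inductive step I would use the two shift computations
\[
\frac{6\cdot7^{2k+2\beta}-7^{2k-1}+1}{24}-\frac{18\cdot7^{2k+2\beta-1}-7^{2k-1}+1}{24}=7^{2k+2\beta-1}
\]
and
\[
\frac{18\cdot7^{2k+2\beta+1}-7^{2k-1}+1}{24}-\frac{6\cdot7^{2k+2\beta}-7^{2k-1}+1}{24}=5\cdot7^{2k+2\beta}.
\]
The first shows that the progression occurring in \eqref{G2} at level $\beta$ is obtained from the one in \eqref{G1} at level $\beta$ by retaining the terms whose index is $\equiv1\pmod7$; concretely, starting from \eqref{G1} at level $\beta$, I multiply the series by $q^{-1}$, apply $H$, invoke \eqref{H4i-1} term by term, interchange the sums over $i$ and $j$, and replace $q^7$ by $q$. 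This produces \eqref{G2} at level $\beta$ with $y^{(2k-1)}_{2\beta+2,j}=\sum_{i\ge1}y^{(2k-1)}_{2\beta+1,i}\,m_{4i-1,i+j-1}$, which is the odd-index branch of \eqref{G12}. Symmetrically, the second shift shows that the progression in \eqref{G1} at level $\beta+1$ is the ``index $\equiv5\pmod7$'' subprogression of the one in \eqref{G2} at level $\beta$; starting from \eqref{G2} at level $\beta$, I multiply by $q^2$, apply $H$, invoke \eqref{H4i-3}, interchange sums, replace $q^7$ by $q$, and divide by $q$, obtaining \eqref{G1} at level $\beta+1$ with $y^{(2k-1)}_{2\beta+3,j}=\sum_{i\ge1}y^{(2k-1)}_{2\beta+2,i}\,m_{4i-3,i+j-1}$, the even-index branch of \eqref{G12}. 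Chaining the two steps completes the induction; the finiteness of each vector $\mathbf{y}^{(2k-1)}_{\beta}$, which legitimizes the interchanges of summation, follows from $\mathbf{y}^{(2k-1)}_{1}=\mathbf{x}_{2k-1}$ having finite support together with the bandedness of $M$ (for each fixed $i$, $m_{i,j}=0$ once $j$ is large, by \eqref{mij} and the tables).

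The real obstacle is not any individual identity but the bookkeeping of the inductive step: one must read off the exact residues $1$ and $5$ modulo $7$ from the $\delta$-shifts, pair each with the correct premultiplied power of $q$ (namely $q^{-1}$ for \eqref{H4i-1} and $q^{2}$ for \eqref{H4i-3}, chosen precisely so that the $q$-exponent of the argument of $H$ becomes a multiple of $7$), and then verify that the two exponent shapes $\left(f_7/f_1\right)^{4j-1}$ and $\left(f_7/f_1\right)^{4j-3}$ alternate in lockstep with the parity of $\beta$ and with the two cases of \eqref{G12}. Keeping all of this synchronized, rather than proving any one formula in isolation, is where the care lies.
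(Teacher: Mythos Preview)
Your proposal is correct and follows essentially the same route as the paper: the base case via $\lambda_{2k-1}$ and Garvan's formula \eqref{H1}, then an alternating induction using $H$ together with \eqref{H4i-1} and \eqref{H4i-3}. You are in fact more explicit than the paper about the $q$-shifts ($q^{-1}$ and $q^{2}$), the residues $1$ and $5$ modulo $7$, and the finite support of the $\mathbf{y}$-vectors, all of which the paper leaves implicit.
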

\begin{proof}
From \eqref{bl}, we have
\begin{equation}\label{p1}
    \sum\limits_{n\geq0}b_{7^{2k-1}}(n)q^n=f_{7^{2k-1}}\sum\limits_{n\geq0}p(n)q^n.
\end{equation}
Extracting the terms involving $q^{7^{2k-1}n+\lambda_{2k-1}}$ on both sides of \eqref{p1} and dividing throughout by $q^{\lambda_{2k-1}}$, we obtain  
\begin{equation*}
    \sum\limits_{n\geq0}b_{7^{2k-1}}(7^{2k-1}n+\lambda_{2k-1})q^{7^{2k-1}n}=f_{7^{2k-1}}\sum\limits_{n\geq0}p(7^{2k-1}n+\lambda_{2k-1})q^{7^{2k-1}n}.
\end{equation*}
If we replace $q^{7^{2k-1}}$ by $q$ and  use \eqref{H1}, we get
\begin{equation*}
    \sum\limits_{n\geq0}b_{7^{2k-1}}(7^{2k-1}n+\lambda_{2k-1})q^n=\sum_{j\geq 1}x_{2k-1,j} \, q^{j-1}\frac{f_7^{4j-1}}{f_1^{4j-1}},
\end{equation*}
which is the case $\beta=0$ of \eqref{G1}.

We now assume that \eqref{G1} is true for some integer $\beta\geq 0$. Applying the operator $H$ to both sides, by \eqref{H4i-1}, we have

\begin{align*}
    &\sum_{n\geq 0}b_{7^{2k-1}}\left(7^{2k+2\beta}n+\frac{6\cdot7^{2k+2\beta}-7^{2k-1}+1}{24}\right)q^{7n}
    \\&=\sum_{i\geq 1}y^{(2k-1)}_{2\beta+1,i} \, H\left(q^{i-2}\frac{f_7^{4i-1}}{f_1^{4i-1}}\right)
    \\&=\sum_{i\geq 1}y^{(2k-1)}_{2\beta+1,i}\sum_{j\geq1} m_{4i-1,i+j-1}\, q^{7j-7} \frac{f_{49}^{4j-3}}{f_7^{4j-3}}
    \\&=\sum_{j\geq 1}\left(\sum_{i\geq1}y^{(2k-1)}_{2\beta+1,i} \, m_{4i-1,i+j-1}\right) q^{7j-7} \frac{f_{49}^{4j-3}}{f_7^{4j-3}}\\&
    =\sum_{j\geq 1}y^{(2k-1)}_{2\beta+2,j} \,q^{7j-7} \frac{f_{49}^{4j-3}}{f_7^{4j-3}}.
\end{align*}
That is,
\begin{equation*}
    \sum_{n\geq 0}b_{7^{2k-1}}\left(7^{2k+2\beta}n+\frac{6\cdot7^{2k+2\beta}-7^{2k-1}+1}{24}\right)q^n=\sum_{j\geq 1}y^{(2k-1)}_{2\beta+2,j} \,q^{j-1} \frac{f_{7}^{4j-3}}{f_1^{4j-3}}.
\end{equation*}
Hence if \eqref{G1} is true for some integer $\beta \geq 0$, then \eqref{G2} is true for $\beta$.
Suppose that \eqref{G2} is true for some integer $\beta \geq 0$. Applying the operating $H$ to both sides, by \eqref{H4i-3}, we obtain
\begin{align*}
       & \sum_{n\geq 0}b_{7^{2k-1}}\left(7^{2k+2\beta+1}n+\frac{18\cdot7^{2k+2\beta+1}-7^{2k-1}+1}{24}\right)q^{7n+7}
       \\&=\sum_{i\geq 1}y^{(2k-1)}_{2\beta+2,i} \sum_{j\geq1} m_{4i-3,i+j-1} q^{7j} \, \frac{f_{49}^{4j-1}}{f_7^{4j-1}}
       \\&=\sum_{j\geq 1}\left(\sum_{i\geq1}y^{(2k-1)}_{2\beta+2,i} \, m_{4i-3,i+j-1}\right) q^{7j} \, \frac{f_{49}^{4j-1}}{f_7^{4j-1}}
       \\&=\sum_{j\geq 1}y^{(2k-1)}_{2\beta+3,j} \,q^{7j} \, \frac{f_{49}^{4j-1}}{f_7^{4j-1}},
\end{align*}
which yields
\begin{equation*}
    \sum_{n\geq 0}b_{7^{2k-1}}\left(7^{2k+2\beta+1}n+\frac{18\cdot7^{2k+2\beta+1}-7^{2k-1}+1}{24}\right)q^{n+1}=\sum_{j\geq 1}y^{(2k-1)}_{2\beta+3,j} \,q^{j} \, \frac{f_{7}^{4j-1}}{f_1^{4j-1}}.
\end{equation*}
This is \eqref{G1} with $\beta$ replaced by $\beta+1$. This completes the proof.
\end{proof}
\begin{theorem} \label{T2}
For each $\beta \geq0$, and $k\geq1$, we have
 \begin{equation}\label{G3}
\sum_{n\geq 0}b_{7^{2k}}\left(7^{2k+\beta-1}\left(n+1\right)-\frac{7^{2k}-1}{24}\right)q^n=\sum_{i\geq 1}y^{(2k)}_{\beta+1,i} \, q^{i-1}\frac{f_7^{4i}}{f_1^{4i}}.
\end{equation}
where coefficient vectors are defined as follow:
\begin{equation*}
        y_{1,j}^{(2k)}=x_{2k-1,j}
\end{equation*}
and
\begin{equation*}
y^{(2k)}_{\beta+2,j}=\sum_{i\geq1}y^{(2k)}_{\beta+1,i}\, m_{4i,i+j}.
\end{equation*}
\end{theorem}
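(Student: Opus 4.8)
The plan is to establish \eqref{G3} by induction on $\beta$, in close parallel with the proof of Theorem~\ref{T1}, the one difference being that the single dissection identity \eqref{H4i} now plays the role of the pair \eqref{H4i-1}, \eqref{H4i-3}. The base case $\beta=0$ will be read off from \eqref{bl} together with Garvan's generating function \eqref{H1}, and each inductive step will consist of multiplying through by a suitable power of $q$, applying the huffing operator $H$, and then rescaling $q^{7}\mapsto q$ — the power of $q$ being chosen exactly so that $H$ isolates the residue class in which the factor $n+1$ of the argument of $b_{7^{2k}}$ becomes a multiple of $7$.

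For the base case, \eqref{bl} gives $\sum_{n\ge 0}b_{7^{2k}}(n)q^n=f_{7^{2k}}\sum_{n\ge 0}p(n)q^n$. Every exponent occurring in $f_{7^{2k}}$ is a multiple of $7^{2k}$, hence of $7^{2k-1}$, so the part of this product lying in the residue class $\lambda_{2k-1}\pmod{7^{2k-1}}$ (where $\lambda_{2k-1}$ is the reciprocal of $24$ modulo $7^{2k-1}$) equals $f_{7^{2k}}$ times the class-$\lambda_{2k-1}$ part of $\sum_{n}p(n)q^n$. Dividing by $q^{\lambda_{2k-1}}$ and replacing $q^{7^{2k-1}}$ by $q$ sends $f_{7^{2k}}$ to $f_7$ (because $f_{7^{2k}}$ is a series in $q^{7^{2k}}=(q^{7^{2k-1}})^{7}$), and \eqref{H1} then gives
\[
\sum_{n\ge 0}b_{7^{2k}}\!\left(7^{2k-1}n+\lambda_{2k-1}\right)q^{n}=f_{7}\sum_{j\ge 1}x_{2k-1,j}\,q^{j-1}\frac{f_7^{4j-1}}{f_1^{4j}}=\sum_{j\ge 1}x_{2k-1,j}\,q^{j-1}\frac{f_7^{4j}}{f_1^{4j}}.
\]
Since $24\lambda_{2k-1}=17\cdot 7^{2k-1}+1$ and $0<\lambda_{2k-1}<7^{2k-1}$, one has $\lambda_{2k-1}=7^{2k-1}-\tfrac{7^{2k}-1}{24}$, so $7^{2k-1}n+\lambda_{2k-1}=7^{2k-1}(n+1)-\tfrac{7^{2k}-1}{24}$; together with $y^{(2k)}_{1,j}=x_{2k-1,j}$ this is \eqref{G3} at $\beta=0$.

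For the inductive step, suppose \eqref{G3} holds for some $\beta\ge 0$ and multiply both sides by $q$, so the right-hand summand becomes $y^{(2k)}_{\beta+1,i}\,q^{i}f_7^{4i}/f_1^{4i}$. Applying $H$, the left side keeps only the terms with $n+1\equiv 0\pmod 7$, say $n=7m+6$: then $7^{2k+\beta-1}(n+1)-\tfrac{7^{2k}-1}{24}=7^{2k+\beta}(m+1)-\tfrac{7^{2k}-1}{24}$ and $q^{n+1}=q^{7(m+1)}$, while by \eqref{H4i} (itself a consequence of \eqref{Gu2}) and the defining recursion for $y^{(2k)}$ the right side becomes $\sum_{i\ge 1}y^{(2k)}_{\beta+1,i}\sum_{j\ge 1}m_{4i,i+j}\,q^{7j}\frac{f_{49}^{4j}}{f_7^{4j}}=\sum_{j\ge 1}y^{(2k)}_{\beta+2,j}\,q^{7j}\frac{f_{49}^{4j}}{f_7^{4j}}$. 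Replacing $q^{7}$ by $q$ (so $f_{49}\mapsto f_7$, $f_7\mapsto f_1$) and then dividing by $q$ yields exactly \eqref{G3} with $\beta$ replaced by $\beta+1$. The only point demanding care — the analogue of the shift between the $q^{i-1}$ of \eqref{G1}, \eqref{G2} and the $q^{i-2}$, resp.\ $q^{i+1}$, of \eqref{H4i-1}, \eqref{H4i-3} — is to synchronise the exponent of $q$ in \eqref{G3} with the one appearing in \eqref{H4i} by means of the preliminary multiplication by $q$, and hence to recognise that here $H$ extracts the class $n\equiv -1\pmod 7$ rather than $n\equiv 0$. Once this alignment is made the induction is purely formal: both the arithmetic progression and the coefficient vector transform in the prescribed way with no further input.
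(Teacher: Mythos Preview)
Your proof is correct and follows the same approach as the paper: induction on $\beta$, with the base case obtained from \eqref{bl} and \eqref{H1}, and the inductive step carried out by applying $H$ and invoking \eqref{H4i}. You are in fact slightly more explicit than the paper, which glosses over the preliminary multiplication by $q$ and the verification that $\lambda_{2k-1}=7^{2k-1}-\tfrac{7^{2k}-1}{24}$, both of which you spell out.
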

\begin{proof}
    Equation \eqref{bl} can be written as
\begin{equation*}
    \sum\limits_{n\geq0}b_{7^{2k}}(n)q^n=f_{7^{2k}}\sum\limits_{n\geq0}p(n)q^n.
\end{equation*}
Extracting the terms involving $q^{7^{2k-1}n+\lambda_{2k-1}}$ on both sides of the above equation and dividing throughout by $q^{\lambda_{2k-1}}$, we obtain  
\begin{equation}\label{p6}
    \sum\limits_{n\geq0}b_{7^{2k}}(7^{2k-1}n+\lambda_{2k-1})q^{7^{2k-1}}=f_{7^{2k}}\sum\limits_{n\geq0}p(7^{2k-1}n+\lambda_{2k-1})q^{7^{2k-1}}.
\end{equation}
If we replace $q^{7^{2k-1}}$ by $q$ and use \eqref{H1}, we get
\begin{equation}\label{p7}
    \sum\limits_{n\geq0}b_{7^{2k}}(7^{2k-1}n+\lambda_{2k-1})q^n=\sum_{j\geq 1}x_{2k-1,j} \, q^{j-1}\frac{f_7^{4j}}{f_1^{4j}},
\end{equation}
which is the case $\beta=0$ of \eqref{G3}.

We now assume that \eqref{G3} is true for some integer $\beta$. Applying the operator $H$ to both sides, by \eqref{H4i}, we have
\begin{equation*}
    \begin{split}
\sum_{n\geq0}b_{7^{2k}}\left(7^{2k+\beta}\left(n+1\right)-\frac{7^{2k}-1}{24}\right)q^{7n+7}&=\sum_{i\geq 1}y^{(2k)}_{\beta+1,i}\sum_{j\geq1} m_{4i, i+j} \, q^{7j} \frac{f_{49}^{4j}}{f_7^{4j}}\\&
        =\sum_{j\geq 1}\left(\sum_{i\geq1}y^{(2k)}_{\beta+1,i} \, m_{4i, i+j} \right)q^{7j} \frac{f_{49}^{4j}}{f_7^{4j}}\\&
        =\sum_{j\geq 1}y^{(2k)}_{\beta+2,j}\, q^{7j} \frac{f_{49}^{4j}}{f_7^{4j}},
    \end{split}
\end{equation*}
which implies that
\begin{equation*}
\sum_{n\geq0}b_{7^{2k}}\left(7^{2k+\beta}\left(n+1\right)-\frac{7^{2k}-1}{24}\right)q^n=\sum_{j\geq 1}y^{(2k)}_{\beta+2,j}q^{j-1} \frac{f_{7}^{4j}}{f_1^{4j}}.
\end{equation*}
So we obtain \eqref{G3} with $\beta$ replaced by $\beta+1$. 
\end{proof}

\section{Proof of Congruences}

For a positive integer $n$, let $\pi(n)$ be the highest power of $7$ that divides $n$, and define $\pi(0)=+\infty$.
\begin{lemma}[\cite{FG}, Lemma 5.1]
    For each $i,j\geq1 $, we have \begin{equation}\label{pmij} \pi\left(m_{i,j}\right)\geq\left[\frac{7j-2i-1}{4}\right].
    \end{equation}
     \end{lemma}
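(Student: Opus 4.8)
The plan is to prove the bound by strong induction on the row index $i$, establishing for each fixed $i$ the inequality $\pi(m_{i,j})\ge\left[\frac{7j-2i-1}{4}\right]$ simultaneously for all columns $j\ge 1$.

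\emph{Base cases.} First I would dispose of $1\le i\le 7$ by a direct check: for $1\le j\le 14$ the valuations are read off from the two tables of values of $m_{i,j}$ and compared with $\left[\frac{7j-2i-1}{4}\right]$ entry by entry, while for $j>2i$ one has $m_{i,j}=0$, so $\pi(m_{i,j})=+\infty$ and nothing is needed. (The vanishing $m_{i,j}=0$ for $j>2i$ reflects the degrees of the polynomials from which $M$ is built; a self-contained proof of it follows by a short separate induction using \eqref{mij}. Alternatively, only finitely many columns of $M$ ever enter the congruences of Theorem~\ref{th1}, so the behaviour as $j\to\infty$ is not actually needed for the applications.)

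\emph{Inductive step.} Fix $i\ge 8$ and assume the bound for every row $<i$. The columns $j=1,2$ are immediate, since $m_{i,1}=0$ for $i\ge 4$ and $m_{i,2}=0$ for $i\ge 8$. For $j\ge 3$ I would apply the recurrence \eqref{mij}, which writes $m_{i,j}$ as a sum of ten terms of the form $7^{a}c\,m_{i',j'}$ with $c$ prime to $7$, where $(i',j')$ runs through $(i-3,j-1),(i-2,j-1),(i-1,j-1),(i-7,j-2),(i-6,j-2),(i-5,j-2),(i-4,j-2),(i-3,j-2),(i-2,j-2),(i-1,j-2)$ and the matching exponents are $a=1,1,2,0,1,1,2,2,3,3$ (reading off $\pi(7)=\pi(35)=\pi(21)=1$, $\pi(49)=\pi(147)=2$, $\pi(343)=3$). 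By the inductive hypothesis each term has $7$-adic valuation at least $a+\left[\frac{7j'-2i'-1}{4}\right]$, and a one-line computation — using $\left[\frac{x+4a}{4}\right]=a+\left[\frac{x}{4}\right]$ for integers $a$ together with monotonicity of $[\,\cdot\,]$ — shows $a+\left[\frac{7j'-2i'-1}{4}\right]\ge\left[\frac{7j-2i-1}{4}\right]$ in each of the ten cases. Since $\pi$ of a sum is at least the minimum of $\pi$ over its summands, this gives $\pi(m_{i,j})\ge\left[\frac{7j-2i-1}{4}\right]$ and closes the induction.

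\emph{Main obstacle.} There is no deep obstacle here: the inductive step is purely mechanical, and the only real content is that the ten coefficients in \eqref{mij} carry exactly the powers of $7$ that let the estimate propagate. In particular the term $m_{i-7,j-2}$ has coefficient $1$ and already satisfies $\left[\frac{7(j-2)-2(i-7)-1}{4}\right]=\left[\frac{7j-2i-1}{4}\right]$, so there is no margin along that branch (and the same happens along the $m_{i-5,j-2}$, $m_{i-3,j-2}$ and $m_{i-1,j-2}$ branches); this is precisely why the exponent in the statement has this shape, and it is what one must keep in view while organizing the ten-case check. The only part calling for a little care is the base-case bookkeeping — in particular keeping track of which low-index entries $m_{i',j'}$ with $j'\in\{1,2\}$ surface when $j=3$ — and confirming the vanishing of $m_{i,j}$ for $j>2i$ if one insists on a fully self-contained argument.
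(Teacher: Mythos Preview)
The paper does not supply its own proof of this lemma: it is quoted verbatim as Lemma~5.1 of Garvan~\cite{FG} and used without argument. Your proposed proof by strong induction on $i$, feeding the recurrence \eqref{mij} through the valuation $\pi$ and checking the ten branches, is exactly the argument Garvan gives; the ten-case verification and the identification of the tight branches $(i-7,j-2)$, $(i-5,j-2)$, $(i-3,j-2)$, $(i-1,j-2)$ are correct, and the base cases $1\le i\le 7$ are disposed of by the tables together with $m_{i,j}=0$ for $j>2i$. So your approach is correct and coincides with the cited source.
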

In particular,
\begin{align}
\pi\left(m_{4i-1,i+j-1}\right)&\geq\left[\frac{7(i+j-1)-2(4i-1)-1}{4}\right]=\left[\frac{7j-i-6}{4}\right],\label{p4i-1} \\
\pi\left(m_{4i-3,i+j-1}\right)&\geq\left[\frac{7(i+j-1)-2(4i-3)-1}{4}\right]=\left[\frac{7j-i-2}{4}\right],\label{p4i-3} 
\end{align}
and
\begin{equation}\label{p4i} 
\pi\left(m_{4i,i+j}\right)\geq\left[\frac{7(i+j)-2(4i)-1}{4}\right]=\left[\frac{7j-i-1}{4}\right].
\end{equation}
    
\begin{lemma}[\cite{FG}, Lemma 5.3]
    For all $k,j\geq1 $, we have
    \begin{equation}\label{pi1}
  \pi(x_{1,1})=1,\,\,\, \pi(x_{1,2}) =2,\,\,\, \pi\left(x_{2k+1,j}\right)\geq k+1+\left[\frac{7j-4}{4}\right]
     \end{equation} 
    and
    \begin{equation}\label{pi11}
   \pi\left(x_{2k,j}\right)\geq k+1+\left[\frac{7j-6}{4}\right].
     \end{equation}
    \end{lemma}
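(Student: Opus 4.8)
The parts $\pi(x_{1,1})=1$ and $\pi(x_{1,2})=2$ are immediate from $\mathbf{x}_1=(7,7^2,0,\dots)$, so the real content is the two inequalities, which I would prove simultaneously by induction on the index of $\mathbf{x}_k$: for $m\ge1$ and $j\ge1$ I claim
\[
(A_m):\ \pi(x_{2m,j})\ \ge\ m+1+\left[\frac{7j-6}{4}\right],\qquad (B_m):\ \pi(x_{2m+1,j})\ \ge\ m+1+\left[\frac{7j-4}{4}\right]
\]
(with the convention $\pi(0)=+\infty$), so that establishing $(A_m)$ and $(B_m)$ for all $m\ge1$ is exactly the lemma. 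The only ingredients needed are the defining recurrences $x_{k+1,i}=\sum_{j}x_{k,j}\,m_{4j,\,j+i}$ (for $k$ odd) and $x_{k+1,i}=\sum_{j}x_{k,j}\,m_{4j+1,\,j+i}$ (for $k$ even), together with \eqref{pmij}, which in the two specializations at hand reads $\pi(m_{4j,\,j+i})\ge\left[\tfrac{7i-j-1}{4}\right]$ and $\pi(m_{4j+1,\,j+i})\ge\left[\tfrac{7i-j-3}{4}\right]$.

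For the base case I would use the recurrence with $k=1$ (odd): since $x_{1,j}=0$ for $j\ge3$ we get $x_{2,i}=7\,m_{4,\,i+1}+7^{2}\,m_{8,\,i+2}$, and combining $\pi(x_{1,1})=1$ and $\pi(x_{1,2})=2$ with the estimates above gives $\pi(x_{2,i})\ge\min\!\bigl(1+\left[\tfrac{7i-2}{4}\right],\,2+\left[\tfrac{7i-3}{4}\right]\bigr)\ge 2+\left[\tfrac{7i-6}{4}\right]$, using $\left[\tfrac{7i-2}{4}\right]=\left[\tfrac{7i-6}{4}\right]+1$ and $\left[\tfrac{7i-3}{4}\right]\ge\left[\tfrac{7i-6}{4}\right]$; this is $(A_1)$. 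The inductive step then splits into $(A_m)\Rightarrow(B_m)$, via the recurrence with $k=2m$ (even), $x_{2m+1,i}=\sum_j x_{2m,j}\,m_{4j+1,\,j+i}$, and $(B_m)\Rightarrow(A_{m+1})$, via the recurrence with $k=2m+1$ (odd), $x_{2m+2,i}=\sum_j x_{2m+1,j}\,m_{4j,\,j+i}$.

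Each of these two implications reduces to bounding $\min_{j\ge1}g(j)$ from below for a function of the shape $g(j)=\left[\tfrac{7j-a}{4}\right]+\left[\tfrac{7i-j-b}{4}\right]$. Since $\left[\tfrac{7j+7-a}{4}\right]-\left[\tfrac{7j-a}{4}\right]\ge1$ while lowering a numerator by $1$ decreases a floor by at most $1$, the function $g$ is non-decreasing in $j$, hence $\min_{j\ge1}g(j)=g(1)$; evaluating at $j=1$ gives $g(1)=\left[\tfrac{7i-4}{4}\right]$ in the $(A_m)\Rightarrow(B_m)$ step (which keeps the additive constant $m+1$) and $g(1)=\left[\tfrac{7i-2}{4}\right]=\left[\tfrac{7i-6}{4}\right]+1$ in the $(B_m)\Rightarrow(A_{m+1})$ step (which raises it to $m+2$), which matches the right-hand sides of $(B_m)$ and $(A_{m+1})$ on the nose. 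I expect the only genuine friction to be bookkeeping: keeping the parity of $k$ in the recurrence aligned with the correct index in \eqref{pmij}, and treating the step $x_1\to x_2$ by hand, since extending $(B_m)$ down to $m=0$ would read $\pi(x_{1,j})\ge 1+\left[\tfrac{7j-4}{4}\right]$, which already fails at $j=2$ (there $\pi(x_{1,2})=2<3$), so one must invoke the explicit valuations of $x_{1,1}$ and $x_{1,2}$ in that first step.
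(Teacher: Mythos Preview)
Your argument is correct. The paper itself does not prove this lemma; it simply quotes it from Garvan \cite{FG}, so there is no in-paper proof to compare against. Your approach---induction on the index of $\mathbf{x}_k$ using the defining recurrences together with the specializations of \eqref{pmij}---is exactly the route taken in \cite{FG}, including the observation that the base step $\mathbf{x}_1\to\mathbf{x}_2$ must be handled separately because the bound $(B_0)$ fails at $j=2$. The monotonicity argument for $g(j)=\bigl[\tfrac{7j-a}{4}\bigr]+\bigl[\tfrac{7i-j-b}{4}\bigr]$ is clean and the evaluations $g(1)=\bigl[\tfrac{7i-4}{4}\bigr]$ and $g(1)=\bigl[\tfrac{7i-2}{4}\bigr]=\bigl[\tfrac{7i-6}{4}\bigr]+1$ in the two inductive implications are correct.
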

\begin{lemma}
    For all $j,k\geq1$ and $\beta\geq 0$, we have
\begin{equation}\label{pi4}
   \pi\left(y^{(2k-1)}_{2\beta+1,j}\right)\geq k+\beta+\left[\frac{7j-7}{4}\right]
\end{equation}
   and
\begin{equation}\label{pi5}
\pi\left(y^{(2k-1)}_{2\beta+2,j}\right)\geq k+\beta+\left[\frac{7j-7}{4}\right].
\end{equation}
\end{lemma}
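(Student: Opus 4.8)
The plan is to prove the pair of bounds \eqref{pi4} and \eqref{pi5} simultaneously by induction on $\beta$, using the recursion \eqref{G12} for the coefficient vectors $y^{(2k-1)}_{\beta+1,j}$ together with the valuation estimates \eqref{p4i-1} and \eqref{p4i-3} for the entries of $M$. The base case $\beta=0$ is \eqref{pi4} with $\beta=0$, which by \eqref{G11} asserts $\pi(x_{2k-1,j})\geq k+\left[\frac{7j-7}{4}\right]$; writing $2k-1=2(k-1)+1$, this is exactly \eqref{pi1} after noting $\left[\frac{7j-4}{4}\right]\geq\left[\frac{7j-7}{4}\right]$ and that the constant there is $(k-1)+1+1=k+1\geq k$, so the base case follows immediately from Lemma~5.3 of \cite{FG}. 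I would also record separately the tiny cases from $\pi(x_{1,1})=1$, $\pi(x_{1,2})=2$ so that the induction starting at $k=1$ is not vacuous at small $j$.

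Next I would carry out the inductive step in two halves. Assuming \eqref{pi4} for a given $\beta$, apply $\pi$ to the odd-$\beta$ case of \eqref{G12}, namely $y^{(2k-1)}_{2\beta+2,j}=\sum_{i\geq1}y^{(2k-1)}_{2\beta+1,i}\,m_{4i-1,i+j-1}$. By the non-Archimedean property $\pi\left(\sum\right)\geq\min_i\left(\pi(y^{(2k-1)}_{2\beta+1,i})+\pi(m_{4i-1,i+j-1})\right)$, and using the inductive bound on the first factor and \eqref{p4i-1} on the second, each summand has $\pi$ at least
\[
k+\beta+\left[\tfrac{7i-7}{4}\right]+\left[\tfrac{7j-i-6}{4}\right].
\]
So the crux is the elementary estimate $\left[\tfrac{7i-7}{4}\right]+\left[\tfrac{7j-i-6}{4}\right]\geq\left[\tfrac{7j-7}{4}\right]$ for all $i\geq1$; this I would verify using $\left[a\right]+\left[b\right]\geq\left[a+b\right]-1$ together with the fact that $(7i-7)+(7j-i-6)=6i+7j-13\geq 7j-7$ when $i\geq1$, so $\left[\tfrac{7i-7}{4}\right]+\left[\tfrac{7j-i-6}{4}\right]\geq\left[\tfrac{6i+7j-13}{4}\right]-1\geq\left[\tfrac{7j-7}{4}\right]$ once $i\geq2$, and I would check $i=1$ by hand (there the first floor is $0$ and the second is $\left[\tfrac{7j-7}{4}\right]$). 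This gives \eqref{pi5}. Then, assuming \eqref{pi5}, apply $\pi$ to the even-$\beta$ case $y^{(2k-1)}_{2\beta+3,j}=\sum_{i\geq1}y^{(2k-1)}_{2\beta+2,i}\,m_{4i-3,i+j-1}$ and use \eqref{p4i-3}: each summand has $\pi$ at least $k+\beta+\left[\tfrac{7i-7}{4}\right]+\left[\tfrac{7j-i-2}{4}\right]$, and since $\left[\tfrac{7j-i-2}{4}\right]\geq\left[\tfrac{7j-i-6}{4}\right]$ the same floor inequality gives $\geq k+\beta+1+\left[\tfrac{7j-7}{4}\right]=k+(\beta+1)+\left[\tfrac{7j-7}{4}\right]$, which is \eqref{pi4} with $\beta$ replaced by $\beta+1$. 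Here the extra $+1$ comes for free from the slack between $-2$ and $-6$, namely $\left[\tfrac{7j-i-2}{4}\right]\geq 1+\left[\tfrac{7(j-1)-i-2}{4}\right]$ handled as above, so the induction closes with the constant advancing from $k+\beta$ to $k+\beta+1$.

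The main obstacle I anticipate is purely bookkeeping with the floor function: one must be careful that the infinite sums in \eqref{G12} are genuinely finite (the entries $m_{4i-1,i+j-1}$ and $m_{4i-3,i+j-1}$ vanish for $i$ large relative to $j$, by the support conditions on $M$ — e.g. $m_{i,1}=0$ for $i\geq4$, $m_{i,2}=0$ for $i\geq8$, and the row/column structure in Tables~1--2), so that $\pi$ of the sum is at least the minimum over a nonempty finite index set, and that the minimum is not dragged down by some small $i$. Concretely, the only place the argument could fail is at $i=1$ and $i=2$, where the floors are small; I would dispatch these by direct substitution, exactly as the statement's derivations \eqref{p4i-1}--\eqref{p4i} are obtained from \eqref{pmij}. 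Once the inequality $\left[\tfrac{7i-7}{4}\right]+\left[\tfrac{7j-i-c}{4}\right]\geq\left[\tfrac{7j-7}{4}\right]+\delta_c$ (with $\delta_c=0$ for $c=6$ and $\delta_c=1$ for $c=2$) is established uniformly in $i\geq1$, $j\geq1$, both inductive steps are immediate and the two bounds \eqref{pi4}, \eqref{pi5} follow together for all $\beta\geq0$.
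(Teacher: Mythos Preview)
Your proposal is correct and follows essentially the same route as the paper: induction on $\beta$, with the base case supplied by \eqref{G11} and Garvan's bound \eqref{pi1}, and the two halves of the inductive step obtained by applying the recursion \eqref{G12} together with the valuation estimates \eqref{p4i-1} and \eqref{p4i-3}. One small slip: substituting $k\mapsto k-1$ into \eqref{pi1} gives constant $(k-1)+1=k$, not $k+1$; and your second floor inequality is cleaner than you make it, since $\left[\tfrac{7j-i-2}{4}\right]=\left[\tfrac{7j-i-6}{4}\right]+1$ exactly (the arguments differ by the integer $1$), which immediately yields the extra $+1$ you need.
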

\begin{proof}
In view of Theorem \ref{T1}, we have
\begin{equation*}
        y_{1,j}^{(2k-1)}=x_{2k-1,j}.
\end{equation*}
From \eqref{pi1}, we can see that 
\begin{equation*}
\pi\left(x_{2k-1,j}\right)\geq k+\left[\frac{7j-7}{4}\right],
\end{equation*}
this shows that \eqref{pi4} holds when $\beta=0$.

We now assume that \eqref{pi4} is true for some $\beta\geq 0$, then
\begin{align*}
   \pi\left(y^{(2k-1)}_{2\beta+2,j}\right)&\geq \min_{i\geq1}\left\{\pi\left(y^{(2k-1)}_{2\beta+1,i}\right)+\pi\left(m_{4i-1,i+j-1}\right)\right\}\\&
   \geq \min_{i\geq1}\left\{k+\beta+\left[\frac{7i-7}{4}\right]+\left[\frac{7j-i-6}{4}\right]\right\}\\&
   \geq k+\beta+\left[\frac{7j-7}{4}\right],
   \end{align*}
which is \eqref{pi5}. 

Now suppose \eqref{pi5} holds for some $\beta\geq0$. Then,
\begin{align*}
   \pi\left(y^{(2k-1)}_{2\beta+3,j}\right)&\geq \min_{i\geq1}\left\{\pi\left(y^{(2k-1)}_{2\beta+2,i}\right)+\pi\left(m_{4i-3,i+j-1}\right)\right\}\\&
   \geq \min_{i\geq1}\left\{k+\beta+\left[\frac{7i-7}{4}\right]+\left[\frac{7j-i-2}{4}\right]\right\}\\&
   \geq k+\beta+\left[\frac{7j-3}{4}\right]\\&
   = k+\beta+1+\left[\frac{7j-7}{4}\right],
\end{align*}
which is \eqref{pi4} with $\beta+1$ for $\beta$. This completes the proof.
\end{proof}
\begin{lemma}
For each $j,k\geq1$ and $\beta\geq 0$, we have
\begin{align}\label{pi6}
   \pi\left(y^{(2k)}_{\beta+1,j}\right)\geq k+\beta+\left[\frac{7j-7}{4}\right].
   \end{align}
\end{lemma}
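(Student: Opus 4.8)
The plan is to mirror exactly the inductive structure used for \eqref{pi4} and \eqref{pi5} in the previous lemma, since the recursion for $y^{(2k)}_{\beta+1,j}$ has the same shape. First I would establish the base case $\beta = 0$: by Theorem \ref{T2} we have $y^{(2k)}_{1,j} = x_{2k-1,j}$, and \eqref{pi1} gives $\pi\left(x_{2k-1,j}\right) \geq k + \left[\tfrac{7j-7}{4}\right]$ (writing $2k-1 = 2(k-1)+1$ so the bound is $(k-1)+1 + \left[\tfrac{7j-4}{4}\right] \geq k + \left[\tfrac{7j-7}{4}\right]$; the cases $j=1,2$ with $k=1$ are covered by the exact values $\pi(x_{1,1})=1$, $\pi(x_{1,2})=2$). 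This yields \eqref{pi6} for $\beta = 0$.

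Next I would run the inductive step. Assume \eqref{pi6} holds for some $\beta \geq 0$. From the recursion $y^{(2k)}_{\beta+2,j} = \sum_{i\geq1} y^{(2k)}_{\beta+1,i}\, m_{4i,i+j}$ and the ultrametric inequality for $\pi$,
\[
\pi\left(y^{(2k)}_{\beta+2,j}\right) \geq \min_{i\geq1}\left\{\pi\left(y^{(2k)}_{\beta+1,i}\right) + \pi\left(m_{4i,i+j}\right)\right\}.
\]
Applying the inductive hypothesis and the estimate \eqref{p4i}, this is at least
\[
\min_{i\geq1}\left\{k+\beta+\left[\frac{7i-7}{4}\right] + \left[\frac{7j-i-1}{4}\right]\right\}.
\]
The remaining task is the elementary floor-function estimate: I need to show $\left[\tfrac{7i-7}{4}\right] + \left[\tfrac{7j-i-1}{4}\right] \geq 1 + \left[\tfrac{7j-7}{4}\right]$ for all $i \geq 1$, so that the minimum is at least $k+\beta+1+\left[\tfrac{7j-7}{4}\right]$, which is \eqref{pi6} with $\beta+1$ in place of $\beta$. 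For $i=1$ the first term is $0$ and the inequality reads $\left[\tfrac{7j-2}{4}\right] \geq 1 + \left[\tfrac{7j-7}{4}\right]$, which holds since increasing the numerator by $5$ increases the floor by at least $1$. For $i \geq 2$, one checks $\left[\tfrac{7i-7}{4}\right] \geq \left[\tfrac{i+6}{4}\right]$ (equivalently $7i-7 \geq i+6$, i.e. $6i \geq 13$, true for $i\geq 3$; the case $i=2$ is checked by hand, giving $1 \geq 2$... so I instead split off $i=2$ directly) and then use $\left[\tfrac{i+6}{4}\right] + \left[\tfrac{7j-i-1}{4}\right] \geq \left[\tfrac{7j+5}{4}\right] \geq 1 + \left[\tfrac{7j-7}{4}\right]$ via the additivity bound $[x]+[y] \geq [x+y]-1$ — wait, I want $\geq$, so I use $[x]+[y] \geq [x+y] - 1$ carefully, or better, note $7j - i - 1 + i + 6 = 7j+5$ and $[a]+[b] \ge [a+b]$ fails in general, so I track the residues. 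This floor bookkeeping, while entirely routine, is the one place requiring genuine care.

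I expect the main (and essentially only) obstacle to be exactly that last inequality among floor functions, because unlike the analogous step for $y^{(2k-1)}$, here the indices in $m_{4i,i+j}$ shift by $i+j$ rather than $i+j-1$, so the off-by-one bookkeeping differs slightly from \eqref{p4i-1}/\eqref{p4i-3} and must be redone. Everything else — the base case and the structure of the induction — transcribes verbatim from the proof of the preceding lemma. Once \eqref{pi6} is in hand, combining it with the generating function \eqref{G3} of Theorem \ref{T2} will immediately yield congruence \eqref{c3}, and similarly \eqref{pi4}–\eqref{pi5} with \eqref{G1}–\eqref{G2} yield \eqref{c1}, completing Theorem \ref{th1}.
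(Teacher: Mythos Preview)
Your proposal is correct and follows essentially the same route as the paper: induction on $\beta$ with base case $y^{(2k)}_{1,j}=x_{2k-1,j}$ and the inductive step via $\pi(m_{4i,i+j})\geq\left[\tfrac{7j-i-1}{4}\right]$. The only place you wobble is the floor estimate; the paper handles it in one line by observing that the minimum over $i\geq 1$ of $\left[\tfrac{7i-7}{4}\right]+\left[\tfrac{7j-i-1}{4}\right]$ is attained at $i=1$, giving $\left[\tfrac{7j-2}{4}\right]\geq 1+\left[\tfrac{7j-7}{4}\right]$, so there is no need for your case split at $i=2$ or the failed intermediate bound $\left[\tfrac{7i-7}{4}\right]\geq\left[\tfrac{i+6}{4}\right]$.
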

\begin{proof}
In view of Theorem \ref{T2}, we have
\begin{equation*}
        y_{1,j}^{(2k)}=x_{2k-1,j}.
\end{equation*}
From \eqref{pi1}, we can see that \eqref{pi6} holds when $\beta=0$.
We now assume that \eqref{pi6} is true for some $\beta\geq 0$, then
\begin{align*}
   \pi\left(y^{(2k)}_{\beta+2,j}\right)&\geq \min_{i\geq1}\left\{\pi\left(y^{(2k)}_{\beta+1,i}\right)+\pi\left(m_{4i,i+j}\right)\right\}\\&
   \geq \min_{i\geq1}\left\{k+\beta+\left[\frac{7i-7}{4}\right]+\left[\frac{7j-i-1}{4}\right]\right\}\\&
   \geq k+\beta+\left[\frac{7j-2}{4}\right]\\&
   \geq k+\beta+1+\left[\frac{7j-7}{4}\right],
   \end{align*}
which is \eqref{pi6} with $\beta+1$ for $\beta$. This completes the proof.
\end{proof}
\noindent\textbf{Proof of Theorem \ref{th1}}
By \eqref{pi4} we see that $y_{2\beta+1, j}^{(2k-1)}\equiv 0 \pmod{7^{k+\beta}}$. So,  \eqref{c1} follows from \eqref{G1}.
Similarly, congruence \eqref{c3} follows from \eqref{G3} and \eqref{pi6}.

\end{document}